\documentclass{amsart}

\usepackage[T1]{fontenc}
\usepackage{amsmath}
\usepackage{amssymb}
\usepackage{microtype}
\usepackage[hidelinks]{hyperref}

\numberwithin{equation}{section}

\newtheorem{theorem}{Theorem}[section]
\newtheorem{proposition}[theorem]{Proposition}
\newtheorem{lemma}[theorem]{Lemma}

\theoremstyle{remark}

\newcommand{\NN}{\mathbb{N}}
\newcommand{\Apery}{\operatorname{Ap}}

\title[Power-sum obstruction to cyclotomicity]
{A Power-Sum Obstruction to Cyclotomicity in a Family of Symmetric Numerical Semigroups}

\author[Zhi-Lin Zhang]{Zhi-Lin Zhang}
\address{Independent Researcher, Taipei, Taiwan}
\email{hsa00000@gmail.com}

\subjclass[2020]{Primary 20M14; Secondary 11C08}

\keywords{numerical semigroup, symmetric numerical semigroup,
cyclotomic numerical semigroup, semigroup polynomial, Ap\'ery set,
formal logarithm, power sums of reciprocal roots}

\date{}

\begin{document}

\begin{abstract}
For positive integers \(q\) and \(m\) with \(m\geq2q+3\), consider the
symmetric numerical semigroup
\[
S_{m,q}
=
\left\langle
m,\,m+1,\,qm+2q+2,\,qm+2q+3,\,\ldots,\,qm+m-1
\right\rangle.
\]
Ciolan, Garc\'ia-S\'anchez, and Moree asked whether every member of this
family with embedding dimension at least \(4\) is noncyclotomic.  We
answer this question affirmatively for every \(q\geq1\), including
an independent proof of the previously known case \(q=1\).

Let \(P_{m,q}\) be the semigroup polynomial, and set
\[
t=m-2q-3,
\qquad
L=2(q+1)(m+1)-1,
\qquad
D=\deg P_{m,q}=2qm+2q+2.
\]
For \(m\geq2q+4\), let \(\rho_1,\ldots,\rho_D\) be the roots of
\(P_{m,q}\), counted with multiplicity.  An explicit computation of a
single coefficient of the formal logarithm gives
\[
\left|\sum_{j=1}^{D}\rho_j^{-L}\right|
=
L\left|[x^L]\log P_{m,q}(x)\right|
\geq Lt-1
>D.
\]
The strict inequality forces \(P_{m,q}\) to have a root off the unit
circle.  Hence \(S_{m,q}\) is noncyclotomic whenever \(m\geq2q+4\).
The boundary case
\(m=2q+3\) has embedding dimension \(3\) and is cyclotomic.  Therefore
\[
S_{m,q}\text{ is cyclotomic}
\quad\Longleftrightarrow\quad
m=2q+3.
\]
\end{abstract}

\maketitle

\section{Introduction}

Let \(\NN=\{0,1,2,\ldots\}\).  A \emph{numerical semigroup} is a
cofinite additive submonoid of \((\NN,+)\).  Its unique minimal generating
set has cardinality \(e(S)\), the \emph{embedding dimension} of \(S\).
For \(S\neq\NN\), the largest integer outside \(S\) is its
\emph{Frobenius number} \(F(S)\).  The semigroup is \emph{symmetric} if
\[
n\in S
\quad\Longleftrightarrow\quad
F(S)-n\notin S
\qquad(n\in\mathbb Z).
\]

The Hilbert series and semigroup polynomial of \(S\) are
\[
H_S(x)=\sum_{s\in S}x^s,
\qquad
P_S(x)=(1-x)H_S(x).
\]
The series \(H_S(x)\) is rational, and \(P_S(x)\) is a monic polynomial
in \(\mathbb Z[x]\) with constant term \(1\).  Following Ciolan,
Garc\'ia-S\'anchez, and Moree \cite{CiolanGarciaSanchezMoree}, a
numerical semigroup \(S\) is called \emph{cyclotomic} if \(P_S(x)\) is a
finite product of cyclotomic polynomials.  In particular, every zero of
\(P_S\) then lies on the unit circle.

For positive integers \(q\) and \(m\) with \(m\geq2q+3\), define
\begin{equation}
\label{eq:family}
S_{m,q}
=
\left\langle
m,\,m+1,\,qm+2q+2,\,qm+2q+3,\,\ldots,\,qm+m-1
\right\rangle,
\end{equation}
and write \(P_{m,q}(x)=P_{S_{m,q}}(x)\).  This family arises from
Rosales's construction of symmetric numerical semigroups with prescribed
multiplicity and embedding dimension \cite{Rosales}.  In the present
notation, its symmetry and embedding-dimension formula follow from
\cite[Lemma~4.22]{RosalesGarciaSanchez}; in particular,
\begin{equation}
\label{eq:embedding-dimension}
e(S_{m,q})=m-2q.
\end{equation}
Ciolan, Garc\'ia-S\'anchez, and Moree recorded these facts together with
the Ap\'ery-set description used below; see
\cite[Example~3]{CiolanGarciaSanchezMoree}.  They asked whether every
member of this family with embedding dimension at least \(4\), or
equivalently \(m\geq2q+4\), is noncyclotomic
\cite[Problem~2]{CiolanGarciaSanchezMoree}.

Sawhney and Stoner settled the case \(q=1\) by combining an asymptotic
result with a finite verification
\cite[Theorem~1 and Appendix~A]{SawhneyStoner}.  Related criteria based
on logarithmic derivatives evaluated at \(x=\pm1\) were developed by
Herrera-Poyatos and Moree, with applications to symmetric noncyclotomic
numerical semigroups \cite{HerreraPoyatosMoree}.  Further work on cyclotomic numerical semigroups appears in
\cite{BorziHerreraPoyatosMoree,
CiolanGarciaSanchezHerreraPoyatosMoree}.  For a recent survey of open
problems in numerical semigroup theory, see
\cite{MoscarielloSammartano}.

Our argument instead uses a single coefficient in the formal
expansion of \(\log P_{m,q}(x)\) at \(x=0\).  If
\(\rho_1,\ldots,\rho_D\) are the roots of \(P_{m,q}\), counted with
multiplicity, where \(D=\deg P_{m,q}\), then
\[
-r[x^r]\log P_{m,q}(x)
=
\sum_{j=1}^{D}\rho_j^{-r}.
\]
At the index
\[
r=L=2(q+1)(m+1)-1,
\]
we show that the absolute value of this power sum is greater than \(D\).
This contradicts the triangle-inequality bound that would hold if every
root lay on the unit circle.  The resulting coefficient argument applies
to every \(q\geq1\), and in particular gives an independent proof of the
case \(q=1\).  To the best of our knowledge, no previous result covers
the full range \(q\geq2\) in the family \eqref{eq:family}.

\begin{theorem}
\label{thm:classification}
Let \(q\) and \(m\) be positive integers with \(m\geq2q+3\).  Then
\[
S_{m,q}\text{ is cyclotomic}
\quad\Longleftrightarrow\quad
m=2q+3.
\]
In particular, Problem~2 of \cite{CiolanGarciaSanchezMoree} has an
affirmative answer.
\end{theorem}

Section~2 derives the required Ap\'ery-set formulas and records the
power-sum identity for reciprocal roots used to detect a zero off the unit circle.
Section~3 computes the coefficient at \(L\) and proves
Theorem~\ref{thm:classification}.

\section{Ap\'ery-set formulas and power sums of reciprocal roots}

Let \(S\) be a numerical semigroup and let \(a\in S\setminus\{0\}\).
The Ap\'ery set of \(S\) with respect to \(a\) is
\[
\Apery(S;a)=\{s\in S:s-a\notin S\}.
\]
It contains one element in each residue class modulo \(a\), and hence
\[
H_S(x)
=
\frac{W_{S,a}(x)}{1-x^a},
\qquad
P_S(x)
=
\frac{(1-x)W_{S,a}(x)}{1-x^a},
\qquad
W_{S,a}(x)
:=
\sum_{w\in\Apery(S;a)}x^w.
\]
If \(S\neq\NN\), then
\begin{equation}
\label{eq:degree-from-apery}
\deg P_S
=
F(S)+1
=
\max\Apery(S;a)-a+1.
\end{equation}
These standard Ap\'ery-set identities and the degree formula are recalled
in \cite[Section~2]{CiolanGarciaSanchezMoree}.

For the family \(S_{m,q}\), the Ap\'ery-set computation in
\cite[Example~3]{CiolanGarciaSanchezMoree} gives the formulas needed
for the coefficient calculation in Section~3.

\begin{proposition}
\label{prop:apery-family}
Let \(q\) and \(m\) be positive integers with \(m\geq2q+3\), and set
\[
t=m-2q-3,
\qquad
v=qm+2q+2,
\qquad
A_t(x)=1+x+\cdots+x^t.
\]
Then
\begin{equation}
\label{eq:apery-set-family}
\Apery(S_{m,q};m)
=
\{k(m+1):0\leq k\leq2q+1\}
\mathbin{\cup}
\{qm+r:2q+2\leq r\leq m-1\}.
\end{equation}
Consequently,
\begin{equation}
\label{eq:W-family}
W_{m,q}(x)
:=
W_{S_{m,q},m}(x)
=
\sum_{k=0}^{2q+1}x^{k(m+1)}
+x^vA_t(x),
\end{equation}
and
\begin{equation}
\label{eq:degree-family}
D:=\deg P_{m,q}=2qm+2q+2.
\end{equation}
\end{proposition}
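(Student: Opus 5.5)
The approach is to verify the Ap\'ery set directly and then read off \eqref{eq:W-family} and \eqref{eq:degree-family}. Write \(T\) for the right-hand side of \eqref{eq:apery-set-family}. First I will check that \(T\) has exactly \(m\) elements, one in each residue class modulo \(m\). We have \(k(m+1)\equiv k\) for \(0\le k\le 2q+1\), and \(qm+r\equiv r\) for \(2q+2\le r\le m-1\). These cover the residues \(0,\dots,m-1\) exactly once.

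Since \(\Apery(S;m)\) also contains exactly one element per residue class, it suffices to show that every element of \(T\) lies in \(S=S_{m,q}\) and is the least element of \(S\) in its residue class. Equivalently, each \(w\in T\) lies in \(S\) and \(w-m\notin S\). Membership is immediate: \(k(m+1)\) is a multiple of the generator \(m+1\), and \(qm+r\) with \(2q+2\le r\le m-1\) is itself a generator.

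The main step is minimality. I will describe the elements of \(S\) of size below roughly \((q+1)m\) explicitly. Every element of \(S\) is a sum \(s=am+b(m+1)+\sum c_r(qm+r)\). Suppose some large generator occurs (\(\sum c_r\ge1\)). Then \(s\ge qm+2q+2\), with residue \(\equiv b+\sum c_r r\). Consider first the elements built only from \(m\) and \(m+1\): these are \(n m+j\) with \(0\le j\le n\). The smallest such element with residue \(j\) is \(j(m+1)\). For \(j\le 2q+1\) this gives \(w=k(m+1)\) minimal among these two-generator elements. It remains to rule out a smaller representative of residue \(k\le 2q+1\) that uses a large generator. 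Any element using one large generator is at least \(qm+2q+2\). For it to have residue \(k\le 2q+1\), we need \(b+\sum c_r r\) to wrap around modulo \(m\). This forces either \(\sum c_r\ge2\), giving size \(\ge 2qm+4q+4\), or \(b+r\ge m\), giving size \(\ge qm+r+b(m+1)\ge qm+m+\dots\). I will compare these bounds with \(k(m+1)\le(2q+1)(m+1)\) using \(m\ge2q+3\); this comparison is the step I expect to need the most care.

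For residue \(r\in[2q+2,m-1]\), candidates smaller than \(qm+r\) are sums \(nm+j\) with \(j\le n\), \(j\equiv r\). Such an element has size at least \(r(m+1)>qm+r\) since \(r\ge 2q+2>q\). Alternatively it could use a large generator \(qm+r'\) with something extra; but adding anything positive to a large generator already exceeds \(qm+m-1\ge qm+r\) unless the residue wraps, and a wrapped element is even larger. Hence \(qm+r\) is minimal.

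With \eqref{eq:apery-set-family} established, \eqref{eq:W-family} follows by summing \(x^w\) over \(T\). Indeed, \(\{qm+r\}\) ranges over \(v,\dots,v+t\) since \(qm+m-1=v+t\). For \eqref{eq:degree-family}, I apply \eqref{eq:degree-from-apery}. The maximum of \(T\) is \(\max\{(2q+1)(m+1),\,qm+m-1\}=(2q+1)(m+1)\), because \((q+1)m+2q+2>0\). Therefore \(D=(2q+1)(m+1)-m+1=2qm+2q+2\).
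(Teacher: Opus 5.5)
Your route differs from the paper's only in that the paper does not verify \eqref{eq:apery-set-family} at all. It quotes it from \cite[Example~3]{CiolanGarciaSanchezMoree} and then, as you do, sums monomials and applies \eqref{eq:degree-from-apery}. A direct verification would make the proposition self-contained. Your residue count, membership check, treatment of residues \(r\in[2q+2,m-1]\), and derivation of \eqref{eq:W-family} and \eqref{eq:degree-family} are fine. (The difference \((2q+1)(m+1)-(qm+m-1)\) is \(qm+2q+2\), not \((q+1)m+2q+2\), but only positivity matters.)

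The step you flagged, however, does not go through as set up. Take a residue \(k\leq 2q+1\) and an element using at least two large generators. You plan to compare \(2qm+4q+4\) with \(k(m+1)\leq(2q+1)(m+1)\). But \((2q+1)(m+1)-(2qm+4q+4)=m-2q-3=t\), so for every \(t>0\) the bound is too small, and no use of \(m\geq2q+3\) can rescue it. For example, with \(q=1\), \(m=10\) the bound gives \(28\), while the Ap\'ery element of residue \(3\) is \(33\). Size alone cannot exclude an element of residue \(2q+1\) in \([2qm+4q+4,(2q+1)(m+1))\); the residue condition must be used here too. Similarly, in the one-large-generator case the estimate \(qm+r+b\geq qm+m+k\) falls below \(k(m+1)\) once \(k\geq q+2\), so you must keep the full weight \(b(m+1)\).

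A fix covering all cases: write \(s=am+b(m+1)+\sum_{i=1}^{c}(qm+r_i)\) and \(R:=b+\sum_i r_i=\ell m+\bar{r}\) with \(0\leq\bar{r}<m\), so that \(s=(a+b+cq+\ell)m+\bar{r}\). Minimality of your candidate in residue \(\bar{r}\) is then equivalent to \(a+b+cq+\ell\geq\bar{r}\) when \(\bar{r}\leq2q+1\), and to \(a+b+cq+\ell\geq q\) when \(\bar{r}\geq2q+2\).
\begin{itemize}
\item If \(c=0\), then \(b=R\geq\bar{r}\), which suffices in both cases, since \(\bar{r}\geq2q+2>q\) in the second.
\item If \(c\geq1\) and \(\bar{r}\geq2q+2\), then \(cq\geq q\).
\item If \(c\geq1\) and \(\bar{r}\leq2q+1\), then \(R\geq2q+2>\bar{r}\) forces \(\ell\geq1\). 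For \(c\geq2\) this gives \(cq+\ell\geq2q+1\geq\bar{r}\). For \(c=1\) it gives \(b\geq m+\bar{r}-r_1\geq\bar{r}+1\).
\end{itemize}
With this inserted, your argument is a complete proof.
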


\begin{proof}
Equation \eqref{eq:apery-set-family} is the Ap\'ery-set description in
\cite[Example~3]{CiolanGarciaSanchezMoree}.  Summing its monomials gives
\eqref{eq:W-family}.
The two subsets in \eqref{eq:apery-set-family} have respective maxima
\((2q+1)(m+1)\) and \(qm+m-1\), and
\[
(2q+1)(m+1)-(qm+m-1)
=qm+2q+2>0.
\]
Thus \(\max\Apery(S_{m,q};m)=(2q+1)(m+1)\), and
\eqref{eq:degree-from-apery} gives
\[
\deg P_{m,q}
=(2q+1)(m+1)-m+1
=2qm+2q+2.
\]
\end{proof}

The coefficient calculation in the next section will be converted into
information about the roots of \(P_{m,q}\) by the following identity.
For a formal power series \(G(x)\), let \([x^r]G(x)\) denote the
coefficient of \(x^r\).  If \(G(0)=1\), then \(\log G(x)\) denotes its
formal logarithm.

\begin{lemma}
\label{lem:power-sum}
Let \(f\in\mathbb C[x]\) have degree \(D\) and satisfy \(f(0)=1\).  If
\(\rho_1,\ldots,\rho_D\) are its roots, counted with multiplicity, then
for every integer \(r\geq1\),
\begin{equation}
\label{eq:power-sum}
-r[x^r]\log f(x)
=
\sum_{j=1}^{D}\rho_j^{-r}.
\end{equation}
Consequently, if
\[
r\left|[x^r]\log f(x)\right|>D,
\]
then \(f\) has a zero off the unit circle.
\end{lemma}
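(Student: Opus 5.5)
The plan is to factor \(f\) over \(\mathbb C\) using its roots and then expand the formal logarithm factor by factor. Since \(f(0)=1\), every root \(\rho_j\) is nonzero. Let \(c\) be the leading coefficient. Then \(f(x)=c\prod_j(x-\rho_j)\), and evaluating at \(0\) gives \(c\prod_j(-\rho_j)=1\). Hence
\[
f(x)=\prod_{j=1}^{D}\bigl(1-\rho_j^{-1}x\bigr).
\]
Each factor \(1-\rho_j^{-1}x\) has constant term \(1\). The formal logarithm turns products of series with constant term \(1\) into sums, a standard property of the formal \(\log\) on \(1+x\mathbb C[[x]]\). So we get \(\log f(x)=\sum_j\log(1-\rho_j^{-1}x)\).

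Next, use the expansion \(\log(1-ax)=-\sum_{r\geq1}a^r x^r/r\). Extracting the coefficient of \(x^r\) gives \([x^r]\log f(x)=-\frac1r\sum_j\rho_j^{-r}\). Multiplying by \(-r\) yields \eqref{eq:power-sum}.

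For the consequence, argue by contraposition. Suppose every root satisfies \(|\rho_j|=1\). Then \(|\rho_j^{-r}|=1\) for each \(j\). By the triangle inequality applied to \eqref{eq:power-sum},
\[
r\left|[x^r]\log f(x)\right|\leq\sum_j|\rho_j|^{-r}=D.
\]
This contradicts the hypothesis \(r|[x^r]\log f(x)|>D\), so some root lies off the unit circle.

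The only step needing any care is justifying that the formal logarithm converts the product into a sum. I would handle it via logarithmic derivatives. The formal log is characterized by \((\log G)'=G'/G\) together with \(\log G(0)=0\). The quotient \(G'/G\) is additive over products, and both sides vanish at \(0\), so \(\log(GH)=\log G+\log H\). Equivalently, one can compare the coefficients of \(xf'(x)/f(x)=\sum_j\frac{-\rho_j^{-1}x}{1-\rho_j^{-1}x}\), which gives the power sums directly.
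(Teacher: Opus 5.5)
Your proposal is correct and matches the paper's proof: you factor \(f(x)=\prod_j(1-\rho_j^{-1}x)\), expand the formal logarithm termwise to obtain the power-sum identity, and use the triangle inequality in contrapositive form. Your justification of \(\log(GH)=\log G+\log H\) via logarithmic derivatives fills in a step the paper uses without comment.
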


\begin{proof}
Since \(f(0)=1\), none of its roots is zero, and
\[
f(x)
=
\prod_{j=1}^{D}\left(1-\rho_j^{-1}x\right).
\]
Therefore, in \(\mathbb C[[x]]\),
\[
\log f(x)
=
-\sum_{r\geq1}\frac{x^r}{r}
\sum_{j=1}^{D}\rho_j^{-r},
\]
which proves \eqref{eq:power-sum}.  If all roots lie on the unit circle,
then
\[
r\left|[x^r]\log f(x)\right|
=
\left|\sum_{j=1}^{D}\rho_j^{-r}\right|
\leq
\sum_{j=1}^{D}|\rho_j|^{-r}
=D.
\]
The contrapositive gives the final assertion.
\end{proof}

\section{Coefficient computation and classification}

\begin{proposition}
\label{prop:decisive-coefficient}
Let \(q\geq1\) and \(m\geq2q+4\), and retain the notation of
Proposition~\ref{prop:apery-family}.  Set
\begin{equation}
\label{eq:L}
L=2(q+1)(m+1)-1.
\end{equation}
Then
\begin{equation}
\label{eq:exact-coefficient}
[x^L]\log P_{m,q}(x)
=
t-\frac1L
+
\begin{cases}
\displaystyle \frac13\binom{t-2}{2},
& q=1\text{ and }t\geq4,\\[6pt]
0,
& \text{otherwise}.
\end{cases}
\end{equation}
In particular,
\begin{equation}
\label{eq:coefficient-bound}
[x^L]\log P_{m,q}(x)
\geq
t-\frac1L>0.
\end{equation}
If \(\rho_1,\ldots,\rho_D\) are the roots of \(P_{m,q}\), counted with
multiplicity, then
\begin{equation}
\label{eq:power-sum-obstruction}
\left|\sum_{j=1}^{D}\rho_j^{-L}\right|
\geq
Lt-1
>D.
\end{equation}
\end{proposition}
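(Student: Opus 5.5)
The plan is to split the logarithm using the Ap\'ery-set factorization. By Proposition~\ref{prop:apery-family},
\[
\log P_{m,q}(x)=\log(1-x)+\log W_{m,q}(x)-\log(1-x^m).
\]
The first term contributes \(-1/L\) at \(x^L\). The third term contributes only when \(m\mid L\). Since \(L\equiv 2q+1\pmod m\) and \(0<2q+1<m\), it contributes nothing. So everything reduces to \([x^L]\log W_{m,q}\). Write \(W_{m,q}=1+U\) with
\[
U=\sum_{k=1}^{2q+1}x^{k(m+1)}+\sum_{r=2q+2}^{m-1}x^{qm+r}.
\]
Then \(\log W_{m,q}=\sum_{n\ge1}(-1)^{n+1}U^n/n\), and \([x^L]U^n\) counts ordered \(n\)-tuples of exponents from the two types summing to \(L=2qm+2q+2m+1\).

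I will classify tuples by the number \(j\) of ``\(b\)-type'' exponents \(qm+r\). The tool is reduction modulo \(m+1\): there \(L\equiv-1\), the \(a\)-type exponents \(k(m+1)\) are \(\equiv0\), and \(qm+r\equiv r-q\).
\begin{itemize}
\item \(j=0\): the sum is \(\equiv0\not\equiv-1\), so there are no solutions.
\item \(j=1\): we need \(r\equiv q-1\pmod{m+1}\) with \(2q+2\le r\le m-1\), which is impossible.
\item \(j=2\): the remainder \(L-2qm-r_1-r_2=2q+2m+1-r_1-r_2\) lies in \([2q+3,\,2m-2q-3]\). It must be a multiple of \(m+1\), so it equals \(m+1\) and \(r_1+r_2=m+2q\). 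This forces exactly one \(a\)-term with \(k=1\), so \(n=3\). The number of ordered pairs \((r_1,r_2)\) is \(t\), and there are \(3\) positions for the \(a\)-term. The contribution is \(\frac13\cdot3t=t\).
\item \(j=3\): we need \(3qm+6q+6\le L\), which forces \(q=1\) and \(m\ge9\), i.e.\ \(t\ge4\). Adding any \(a\)-term overshoots, so \(n=3\) with \(r_1+r_2+r_3=m+3\) and \(r_i\ge4\). By stars and bars there are \(\binom{t-2}{2}\) such triples, contributing \(\frac13\binom{t-2}{2}\).
\item \(j\ge4\): already \(4qm+8q+8>L\), so there are no solutions.
\end{itemize}
Summing gives \eqref{eq:exact-coefficient}, and \eqref{eq:coefficient-bound} follows since \(t\ge1\) and \(L>1\).

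For \eqref{eq:power-sum-obstruction}, apply Lemma~\ref{lem:power-sum} to \(f=P_{m,q}\), using \(P_{m,q}(0)=1\) and degree \(D\) from \eqref{eq:degree-family}. This gives
\[
\Bigl|\sum_j\rho_j^{-L}\Bigr|=L\,[x^L]\log P_{m,q}\ge Lt-1\ge L-1.
\]
Finally, \(L-1=2qm+2q+2m=D+2m-2>D\).

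The step I expect to be the main obstacle is the case analysis on \(j\). One must make sure that no other combination of \(a\)-terms hits \(L\) exactly, in particular mixed tuples with several \(a\)-terms. The congruence modulo \(m+1\), together with the size window for the remainder, is what rules these out. The boundary inequalities (e.g.\ \(r_1+r_2\le 2m-2\)) need careful checking.
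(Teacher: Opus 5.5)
Your proof is correct. It differs from the paper's in how \([x^L]\log W_{m,q}\) is computed.

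\emph{The paper's route.} It uses \(\sum_{k=0}^{2q+1}x^{k(m+1)}=(1-x^{L+1})/(1-x^{m+1})\) to write \(W_{m,q}\equiv(1+Z)/(1-x^{m+1})\pmod{x^{L+1}}\), with \(Z=x^v(1-x^{m+1})A_t(x)\). This sweeps all \(a\)-type exponents into \(-\log(1-x^{m+1})\), whose \(x^L\)-coefficient vanishes because \(L\equiv-1\pmod{m+1}\). Since \(\operatorname{ord}_xZ=v\) and \(4v>L\), only \(-\tfrac12Z^2+\tfrac13Z^3\) survive, and the computation reduces to coefficients of \(A_t^2\) and \(A_t^3\).

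\emph{Your route.} You expand \(\log(1+U)\) directly. You sort ordered tuples of Ap\'ery exponents by the number of \(b\)-type entries, and use the residue modulo \(m+1\) plus a size window to rule out all mixed tuples at once. The boundary inequalities you were worried about do hold:
\begin{itemize}
\item the \(j=1\) residue \(q-1\) is unattainable;
\item for \(j=2\), the window \([2q+3,\,2m-2q-3]\) lies strictly below \(2(m+1)\);
\item for \(j=3\), the threshold is \(m\ge9\), and the upper bounds \(r_i\le m-1\) are inactive.
\end{itemize}

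\emph{What each buys.} Your version needs neither the truncation step nor the auxiliary fact that congruent series have congruent logarithms. In exchange it must control tuples with arbitrarily many \(a\)-type exponents, which your congruence argument does cleanly. The paper's version turns the vanishing of higher-order contributions into a pure order count.

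The main term \(t\) also arises in different places. In the paper it comes from the cross term \(-2x^{m+1}\) of \((1-x^{m+1})^2\) inside \(-\tfrac12Z^2\). In your count it comes from the \(n=3\) tuples made of one exponent \(m+1\) and two exponents \(qm+r_1,qm+r_2\), contributing \(\tfrac13\cdot3t\).

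The final appeal to Lemma~\ref{lem:power-sum} and the comparison \(L-1=D+2m-2>D\) are the same as in the paper.
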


\begin{proof}
Because \(L+1=(2q+2)(m+1)\), equation \eqref{eq:W-family} gives
\[
W_{m,q}(x)
=
\frac{1-x^{L+1}}{1-x^{m+1}}
+x^vA_t(x).
\]
Set
\[
Z(x)=x^v(1-x^{m+1})A_t(x).
\]
Working modulo \(x^{L+1}\) in \(\mathbb Q[[x]]\), we obtain
\begin{equation}
\label{eq:W-truncation}
W_{m,q}(x)
\equiv
\frac{1+Z(x)}{1-x^{m+1}}
\pmod{x^{L+1}}.
\end{equation}
We use the following elementary fact about formal logarithms: if
\(F,G\in1+x\mathbb Q[[x]]\) and
\(F\equiv G\pmod{x^{L+1}}\), then
\(\log F\equiv\log G\pmod{x^{L+1}}\).  Indeed,
\(F/G\equiv1\pmod{x^{L+1}}\), and hence
\[
\log F-\log G
=
\log(F/G)
\equiv0
\pmod{x^{L+1}}.
\]
Using
\[
P_{m,q}(x)
=
\frac{(1-x)W_{m,q}(x)}{1-x^m},
\]
equation \eqref{eq:W-truncation} therefore yields
\begin{equation}
\label{eq:log-P-congruence}
\begin{aligned}
\log P_{m,q}(x)
\equiv{}&
\log(1-x)+\log(1+Z(x))\\
&-\log(1-x^{m+1})-\log(1-x^m)
\pmod{x^{L+1}}.
\end{aligned}
\end{equation}
Now
\[
L\equiv-1\pmod{m+1},
\qquad
L\equiv2q+1\pmod m,
\qquad
0<2q+1<m.
\]
Thus the last two logarithms in \eqref{eq:log-P-congruence} have zero
\(x^L\)-coefficient, while
\([x^L]\log(1-x)=-1/L\).  Consequently,
\begin{equation}
\label{eq:log-P-reduction}
[x^L]\log P_{m,q}(x)
=
-\frac1L+[x^L]\log(1+Z(x)).
\end{equation}

We next compute the remaining coefficient.  The degree and the
\(x\)-adic order of \(Z\) are
\[
\deg Z
=v+(m+1)+t
=(q+2)m
<L,
\qquad
\operatorname{ord}_x Z=v,
\]
since \(L-(q+2)m=qm+2q+1>0\).  Moreover,
\[
4v-L
=2(q-1)m+6q+7
>0.
\]
Hence the linear term and all powers \(Z^j\) with \(j\geq4\) in
\[
\log(1+Z)
=Z-\frac{Z^2}{2}+\frac{Z^3}{3}-\cdots
\]
have zero \(x^L\)-coefficient.  Therefore
\begin{equation}
\label{eq:quadratic-cubic}
[x^L]\log(1+Z)
=
-\frac12[x^L]Z^2
+\frac13[x^L]Z^3.
\end{equation}

For the quadratic term, \(L-2v=m+t\), so
\begin{align}
[x^L]Z^2
&=
[x^{m+t}](1-x^{m+1})^2A_t(x)^2 \notag\\
&=
[x^{m+t}]A_t(x)^2
-2[x^{t-1}]A_t(x)^2
+[x^{t-m-2}]A_t(x)^2.
\label{eq:quadratic-shifts}
\end{align}
The first coefficient in \eqref{eq:quadratic-shifts} is zero because
\(\deg A_t^2=2t<m+t\), and the last is zero because
\(t-m-2<0\).  The coefficient of \(x^{t-1}\) in
\(A_t(x)^2\) equals \(t\): it counts the ordered pairs
\((a,b)\in\{0,\ldots,t\}^2\) satisfying \(a+b=t-1\).  Hence
\begin{equation}
\label{eq:quadratic-value}
[x^L]Z^2=-2t.
\end{equation}

It remains to compute the cubic term.  If \(q\geq2\), then
\[
3v-L=(q-2)m+4q+5>0,
\]
so \([x^L]Z^3=0\).  Suppose instead that \(q=1\).  Then
\[
L-3v=m-9=t-4.
\]
Thus \([x^L]Z^3=0\) when \(t\leq3\).  If \(t\geq4\), then
\(t-4<m+1\), and therefore every term involving a positive power of
\(x^{m+1}\) is too highly shifted.  Hence
\[
[x^L]Z^3
=
[x^{t-4}](1-x^{m+1})^3A_t(x)^3
=
[x^{t-4}]A_t(x)^3.
\]
Because \(t-4\leq t\), the upper bounds in \(A_t(x)^3\) do not become
active at this degree.  The coefficient counts the nonnegative triples
\((a,b,c)\) satisfying \(a+b+c=t-4\), and thus
\begin{equation}
\label{eq:cubic-value}
[x^L]Z^3
=
\binom{t-2}{2}.
\end{equation}
Combining \eqref{eq:log-P-reduction},
\eqref{eq:quadratic-cubic}, \eqref{eq:quadratic-value}, and
\eqref{eq:cubic-value} proves \eqref{eq:exact-coefficient}, including
the cases in which the cubic coefficient is zero.

Since \(t\geq1\), equation \eqref{eq:coefficient-bound} is positive.
Lemma~\ref{lem:power-sum} therefore gives
\[
\left|\sum_{j=1}^{D}\rho_j^{-L}\right|
=
L\left|[x^L]\log P_{m,q}(x)\right|
\geq
Lt-1
\geq
L-1.
\]
Finally,
\[
L-1
=2(q+1)(m+1)-2
=2qm+2q+2m
=D+2m-2
>D.
\]
This proves \eqref{eq:power-sum-obstruction}.
\end{proof}

\begin{proof}[Proof of Theorem~\ref{thm:classification}]
If \(m=2q+3\), then \eqref{eq:embedding-dimension} gives
\(e(S_{m,q})=3\).  The semigroup is symmetric by
\cite[Lemma~4.22]{RosalesGarciaSanchez}, and every symmetric numerical
semigroup of embedding dimension at most \(3\) is cyclotomic by
\cite[Lemma~7]{CiolanGarciaSanchezMoree}.  Hence \(S_{m,q}\) is
cyclotomic.

If \(m\geq2q+4\), Proposition~\ref{prop:decisive-coefficient} and
Lemma~\ref{lem:power-sum} show that \(P_{m,q}\) has a zero off the unit
circle.  It is therefore not a product of cyclotomic polynomials, so
\(S_{m,q}\) is noncyclotomic.
\end{proof}

\section*{Declaration of AI use}
OpenAI's ChatGPT was used during the development of this work for assistance
with literature searches, exploration and verification of mathematical arguments,
and manuscript preparation.  All mathematical claims, proofs, citations, and the
final text were independently checked and approved by the author, who takes full
responsibility for the work.


\begin{thebibliography}{99}

\bibitem{BorziHerreraPoyatosMoree}
A. Borz\`i, A. Herrera-Poyatos, and P. Moree,
\emph{Cyclotomic numerical semigroup polynomials with at most two
irreducible factors},
Semigroup Forum \textbf{103} (2021), no.~3, 812--828,
\href{https://doi.org/10.1007/s00233-021-10197-8}
{doi:10.1007/s00233-021-10197-8}.

\bibitem{CiolanGarciaSanchezMoree}
E.-A. Ciolan, P. A. Garc\'ia-S\'anchez, and P. Moree,
\emph{Cyclotomic numerical semigroups},
SIAM J.\ Discrete Math. \textbf{30} (2016), no.~2, 650--668,
\href{https://doi.org/10.1137/140989479}{doi:10.1137/140989479}.

\bibitem{CiolanGarciaSanchezHerreraPoyatosMoree}
A. Ciolan, P. A. Garc\'ia-S\'anchez, A. Herrera-Poyatos, and P. Moree,
\emph{Cyclotomic exponent sequences of numerical semigroups},
Discrete Math. \textbf{345} (2022), no.~6, Paper No.~112820, 22 pp.,
\href{https://doi.org/10.1016/j.disc.2022.112820}
{doi:10.1016/j.disc.2022.112820}.

\bibitem{HerreraPoyatosMoree}
A. Herrera-Poyatos and P. Moree,
\emph{Coefficients and higher order derivatives of cyclotomic
polynomials: old and new},
Expo. Math. \textbf{39} (2021), no.~3, 309--343,
\href{https://doi.org/10.1016/j.exmath.2019.07.003}
{doi:10.1016/j.exmath.2019.07.003}.

\bibitem{MoscarielloSammartano}
A. Moscariello and A. Sammartano,
\emph{Open problems on relations of numerical semigroups},
in M. Bre\v{s}ar, A. Geroldinger, B. Olberding, and D. Smertnig (eds.),
\emph{Recent Progress in Ring and Factorization Theory},
Springer Proc. Math. Stat., vol.~477, Springer, Cham, 2025, 365--380,
\href{https://doi.org/10.1007/978-3-031-75326-8_16}
{doi:10.1007/978-3-031-75326-8\_16}.

\bibitem{Rosales}
J. C. Rosales,
\emph{Symmetric numerical semigroups with arbitrary multiplicity and
embedding dimension},
Proc. Amer. Math. Soc. \textbf{129} (2001), no.~8, 2197--2203,
\href{https://doi.org/10.1090/S0002-9939-01-05819-1}
{doi:10.1090/S0002-9939-01-05819-1}.

\bibitem{RosalesGarciaSanchez}
J. C. Rosales and P. A. Garc\'ia-S\'anchez,
\emph{Numerical Semigroups},
Developments in Mathematics, vol.~20,
Springer, New York, 2009,
\href{https://doi.org/10.1007/978-1-4419-0160-6}
{doi:10.1007/978-1-4419-0160-6}.

\bibitem{SawhneyStoner}
M. Sawhney and D. Stoner,
\emph{On symmetric but not cyclotomic numerical semigroups},
SIAM J.\ Discrete Math. \textbf{32} (2018), no.~2, 1296--1304,
\href{https://doi.org/10.1137/17M1138479}{doi:10.1137/17M1138479}.

\end{thebibliography}
\end{document}